\def\Ord{\mathrm{Ord}}
\def\orp{\mathrm{orp}}
\newtheorem{thm}{Theorem}
\newtheorem*{thm*}{Theorem}
\newtheorem*{prop*}{Proposition}
\newtheorem{cor}[thm]{Corollary}
\newtheorem*{cor*}{Corollary}
\newtheorem{lem}[thm]{Lemma}
\newtheorem*{lem*}{Lemma}
\newtheorem*{claim*}{Claim}
\theoremstyle{remark}
\newtheorem{rem}[thm]{Remark}
\newtheorem*{rem*}{Remark}
\newtheorem*{incirem*}{Incidental remark}
\newtheorem{crit-rem}[thm]{Critical remark}
\newtheorem*{remarks*}{Remarks}
\newtheorem{example}[thm]{Example}
\newtheorem*{example*}{Example}
\newtheorem*{defn*}{Definition}
 \DeclareMathOperator{\Hom}{Hom}
\def\refp #1.{(\ref{#1})}
\newcommand{\A}{\mathcal{A}}
\newcommand{\ul}[1]{\underline {#1}}
\def\sbr #1.{^{[#1]}}
\def\sfl #1.{^{\lfloor #1\rfloor}}
\newcommand\red{{\mathrm red}}
\def\?{{\bf{??}}}
\def\A{\Bbb A}
\def\C{\mathbb C}
\def\P{\mathbb P}
\def\R{\mathbb R}
\def\ord{\text{\rm ord}}
\def\Spec{\text{\rm Spec} }
\def\O{\mathcal O}
\def\g{\mathfrak g}
\def\m{\mathfrak m}
\def\1/2{\frac{1}{2}}
\def\I{\mathcal{ I}}
\def\2{{[2]}}
\def\l{\ell}
\def\nl{\newline}
\def\n{\mathcal{N}}
\def\<{\langle}
\def\>{\rangle}
\def\2{{[2]}}
\def\l{\ell}
\def\scl #1.{^{\lceil#1\rceil}}
\def\spr #1.{^{(#1)}}
\def\sbc #1.{^{\{#1\}}}
\def\subpr#1.{_{(#1)}}
\def\beq{\begin{equation*}}
\def\eeq{\end{equation*}}
\def\g3{{\Gamma\spr 3.}}
\def\gg{{\Gamma\spr 2.}}
\def\gg{\mathbb G}
\newcommand{\eqspl}[2]{
\begin{equation}\label{#1}
\begin{split}
#2\end{split}\end{equation}}
\newcommand{\eqsp}[1]{\begin{equation*}
\begin{split}#1\end{split}\end{equation*}}
\newcommand{\exseq}[3]{
0\to #1\to #2\to #3\to 0
}
\newcommand{\beginalphaenum}{
\begin{enumerate}\renewcommand{\labelenumi}{ }
\item \begin{enumerate}
}
\def\eex{\end{rm}\end{example}}
\begin{document} 

\title{On the size and local equations \\ of
Fibres of general projections
}
\author 
{Ziv Ran}


\thanks{arxiv.org}
\date{\DTMnow}


\address {\nl UC Math Dept. \nl
 Skye surge facility, Aberdeen-Inverness Road,
\nl
Riverside CA 92521 US\nl 
ziv.ran @  ucr.edu\nl
\url{http://math.ucr.edu/~ziv/}
}

 \subjclass[2010]{14N05, 14N15}
\keywords{generic projections, multiple points, singularities}

\begin{abstract}
	For a general birational projection of a smooth nondegenerate projective
	$n$-fold from $\P^{n+c}$ to $\P^m$, $n<m\leq(n+c)/2$, all fibres have total length asymptotically bounded by
	$2^{\sqrt{n}+1} $ and the fibres are locally defined by linear and quadratic equations.
	
\end{abstract}
\maketitle
\section*{Introduction}
Let $X$ be a smooth variety of dimension $n$ in $P=\P^{n+c}$ over $\C$. Let $\Lambda$
 be a general linear subspace of dimension $\lambda<c$ in $P$,
 perforce disjoint from $X$, and let
 \[\pi:=\pi_\Lambda: X\to Q:=\P^m, m:={n+c-\lambda-1}\]
 be projection from $\Lambda$ restricted on $X$. 
 Elements of $Q$ are viewed as $(\lambda+1)$-dimensional
  linear subspaces  $L\subset P$ containing $\Lambda$.
 Let
 \[X^\Lambda_k\subset Q\]
 be the $k$-fold locus of $\pi$, i.e.  the locus in 
 $Q$ of fibres of $\pi$ that have length $k$ or more.
 Note that it is $m-n=c-\lambda-1$ conditions for $L$
 to meet $X$, hence $k(m-n)$ conditions for $L$
 to meet $X$ $k$ times; thus the expected codimension of $X^\Lambda_k$ in
 $Q$ is $k(m-n)$.\par
 The study of the projections $\pi_\Lambda$,  
 their fibres and the loci $X^\Lambda_k$ has a long history in
 classical through modern Algebraic Geometry (some of which is reviewed in
 \cite{grp}, \cite{beis} and \cite{griffiths-harris}). 
 In the case of small $\lambda$, a real breakthrough in their
 modern study was obtained fairly recently by Gruson and Peskine
 \cite{grp}, who gave complete results in the
 case where $\Lambda$ is a point. An alternate proof of the
 Gruson-Peskine theorem, and some partial extensions were
 given in \cite{filling}, \cite{fillingreduced}. In particular, it was
 shown in \cite{fillingreduced} in the case where $\Lambda$ is
 a line,  that a generic fibre of given
 length is reduced, i.e. a collection of distinct points.\par
 The case of projection from a higher-dimensional center has
 remained more mysterious. Focusing to fix ideas on the case of 
  projection of an $n$-fold to $\P^{n+1}$,
 Mather's work \cite{mather}, as carried over to the setting of complex
 algebraic geometry by Alzati and Ottaviani \cite{alzati-ottaviani}
 and summarized in \cite{beis}, shows there  that the projection has 
 corank at most $\sqrt{n}$ at any point
 and that the number of distinct points in a fibre is at most $n$.
 For projections to higher $\P^m$ there are better bounds.
 A construction of Lazarsfeld \cite{laz-pos}
 shows that there exist many examples of generic projections to $\P^{n+1}$ with
 points of corank $\sqrt{n}$,
 asymptotically the largest
  possible, and consequently, as shown by Beheshti-Eisenbud \cite{beis},
   such projections have schematic  fibres
 of length that grows exponentially with $\sqrt{n}$
 (more precisely the length is asymptotically 
 at least $\frac{\sqrt{2/\pi}}{\sqrt[4]{n}}2^{\sqrt{n}}$ for $n>>0$).  
 Such fibres are automatically obstructed . 
 The paper \cite{beis} also gives a certain
 bound on an invariant related to the fibre, extending some work of Mather \cite{mather}.\par
 In this paper we will prove an asymptotically sharp bound on the fibre length
 of (birational) 
 general projections, which shows in particular that Lazarsfeld's examples are
 essentially worst possible; namely, we will prove the following
 (see Corollary \ref{order2} and Corollary \ref{exp}):
 \begin{thm*}
 	Let $X$ be a smooth nondegenerate $n$-fold in $\P^{n+c}$ and let
 	$n<m\leq\frac{n+c}{2}$. Then for the general projection of $X$ to $\P^m$,
 	all fibres have total length at most $2\max(2^{\sqrt{n}}+n-1, 1+2\sqrt{2}(n-1))$ and are locally
 	at each point defined by linear and quadratic equations.
 	\end{thm*}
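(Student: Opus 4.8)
The plan is to localize. Since the fibre $L\cap X$ is finite, its total length is the sum of the local lengths at its finitely many points, so I would bound each local contribution and then assemble. Fix a point $p\in L\cap X$ and choose affine coordinates on $\P^{n+c}$ so that $p$ is the origin and $T_pX$ is spanned by the first $n$ coordinate directions; then $X$ is locally the graph $u\mapsto(u,\phi(u))$ of a map $\phi\colon\C^n\to\C^c$ with $\phi(0)=0$ and $d\phi(0)=0$, so that $\phi$ vanishes to order $2$. Writing the fibre of $\pi_\Lambda$ through $p$ as the common zero locus of $m$ linear forms $\ell_1,\dots,\ell_m$ on $\C^{n+c}$, the fibre near $p$ becomes $\{u:\ell_i(u,\phi(u))=0\}$, and each $\ell_i(u,\phi(u))$ is a linear form in $u$ plus a term of order $\ge 2$ supplied by $\phi$. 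The rank of the linear parts equals $n-r$, where $r=\operatorname{corank}_p(d\pi_\Lambda)$ is the drop of rank of the differential at $p$.

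I would then extract the normal form. After row-reducing the $\ell_i$, exactly $n-r$ of them have independent linear parts; by the implicit function theorem these solve for $n-r$ of the coordinates in terms of the remaining $r$ ``corank coordinates'' $x\in\C^r$, and these $n-r$ relations are the asserted \emph{linear} equations. Substituting them into the remaining $m-n+r$ forms leaves equations in $x$ of order $\ge 2$, whose quadratic leading terms $q_1,\dots,q_{m-n+r}$ are read off from the second fundamental form of $X$ in the corank directions. The heart of the local statement is that, for general $\Lambda$, some $r$ of the $q_j$ form a regular sequence, so that the corank scheme is an isolated complete intersection of quadrics; a finite-$2$-determinacy (Tougeron-type) argument then absorbs the higher-order terms into a coordinate change, exhibiting the fibre at $p$ as cut out by the $n-r$ linear and $m-n+r$ honest \emph{quadratic} equations, and bounding $\operatorname{length}_p(L\cap X)\le 2^{\,r}$ by B\'ezout.

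Next I would bound the corank itself. The locus of points of $X$ where $d\pi_\Lambda$ has corank $\ge r$ is the degeneracy locus of $TX\to\pi_\Lambda^*T_{\P^m}$, of expected codimension $r(m-n+r)$ in $X$; a Kleiman-type transversality statement for the universal projection over the Grassmannian of centers forces, for general $\Lambda$, this locus to be empty once $r(m-n+r)>n$. Since $m-n\ge 1$ this gives $r(r+1)\le n$, i.e. $r\le\sqrt n$ asymptotically, whence $2^{\,r}\le 2^{\sqrt n}$ at every point --- matching Lazarsfeld's examples and accounting for the dominant term.

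Finally I would assemble the global bound over all points of the fibre at once. Applying the same transversality on the multi-jet (multiple-point) strata bounds simultaneously the number $s$ of points of $L\cap X$ and the multiset of coranks $r_1,\dots,r_s$ they carry, through a single codimension inequality in $\P^m$. The two regimes of $\max(2^{\sqrt n}+n-1,\,1+2\sqrt2\,(n-1))$ should then correspond to the corank being concentrated at one point (length $\sim 2^{\sqrt n}$, the remaining points contributing linearly) versus being spread over many low-corank points (each contributing a bounded amount, total linear in $n$); the factor $2$ and the constant $2\sqrt2$ emerge from this count. I expect the two genuinely hard steps to be the finite-$2$-determinacy step --- ruling out any contribution from jets of order $\ge 3$, which a priori could arise from deeper Thom--Boardman behavior and must be excluded using the special structure of projections --- and the uniform multi-point transversality needed to pin down the sharp constants, rather than merely bounding each point in isolation.
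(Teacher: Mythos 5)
Your reduction to a local normal form (linear equations plus $e_p$ equations of order $\ge 2$, local length $\le 2^{e_p}$, coranks constrained by $\sum e_p^2\le n$) matches the paper's use of Mather's bound, and your final optimization of $\sum 2^{x_i}$ over the ball $\sum x_i^2\le n$ is exactly how the paper extracts the constant $2\max(2^{\sqrt n}+n-1,\,1+2\sqrt2(n-1))$. But the step you yourself flag as ``genuinely hard'' --- ruling out jets of order $\ge 3$ --- is the entire content of the theorem, and the mechanisms you propose for it do not work. First, finite $2$-determinacy cannot simply be invoked: whether the local ideal of the fibre is generated in orders $1$ and $2$ is precisely what must be proven, and a priori a general projection could have points where some defining equation of the fibre has order $3$ or more; this is what the paper's order sequence $\Ord_p(L,X)$ is designed to measure. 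Second, the multi-jet/multiple-point transversality you propose for the global assembly is known to fail in this problem: the $k$-fold loci of a general projection are \emph{not} of the expected codimension $k(m-n)$ --- the Lazarsfeld and Beheshti--Eisenbud examples cited in the introduction produce fibres of length on the order of $2^{\sqrt n}$, far exceeding the bound $k\le m/(m-n)$ that expected-codimension transversality would impose --- so no ``single codimension inequality in $\P^m$'' on multiple-point strata can be the source of the length bound.

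The paper closes this gap by a different, global argument. It defines the order sequence $(d_{i,p})$ of $L$ against $X$ at each $p\in L\cap X$ and proves the Projection Inequality $\sum_p\sum_{i=1}^{e_p}\binom{n+c-m+d_{i,p}-2}{d_{i,p}-2}\le m$ for a general $L$ through a general center $\Lambda$. Since a single index with $d_{i,p}\ge 3$ already contributes at least $n+c-m+1>m$ when $2m\le n+c$, all orders are $\le 2$, which is the local statement you need. The inequality itself is proven by first-order deformation theory: deformations of $L$ preserving the order sequences lie in an order-preserving subsheaf $N_L^{\orp}\subset N_L$ whose cokernel has length at least $a(X,L)$, and a vanishing lemma $H^1(N_L^{\orp}(-1))=0$ --- deduced from generic smoothness in characteristic zero applied to the family of triples $(\Lambda,L,p)$ with fixed order sequence dominating the Grassmannian of centers --- forces $a(X,L)\le h^0(N_L(-1))=m$. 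To complete your proposal you would need to supply an argument of this global cohomological kind; the local jet-transversality toolkit alone will not deliver the exclusion of higher-order behavior.
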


This result comes about as follows. We introduce a numerical measure
called \emph{order sequence}, different than 
intersection length
but related to it,
and pertaining to the contact or relative position
of a subvariety $X$ and a linear space $L$ at an
isolated point of their intersection, or
more generally the relative position on an ambient variety 
(such as $\P^{n+c}$) of a subscheme
(such as $X$) and a linear series (such as the series of hyperplanes
through $L$). The basic properties of order sequences are developed
in \S\ref{order-general}, \S\ref{order-projective} and
\S\ref{order-subsheaf}.
Our main technical result is an upper bound on this measure
(Theorem \ref{contact-bound}).
  It is this upper bound that yields Corollary \ref{order2})
  and Corollary \ref{exp}). As for the upper bound, it 
   is a consequence
  of the first-order deformation theory associated to the order 
  sequence, plus a 'vanishing lemma' which already appeared in \cite{fillingreduced}
  and which in turn is ultimately a reflection of 'generic smoothness'
  in char. 0.\par
  I am grateful to the referee for a great many  corrections and helpful comments.
\section{Setup}  
We work over $\C$.
To state our results we need some preliminaries.
First,  notation-wise, for a point $p$ on an $n$-dimensional 
subvariety $X\subset\P^{n+c}$, $T_pX$ denotes the Zariski
  tangent space; for $Z$ is a finite-length scheme, $\l_p(Z)$ denotes local length at $p$.
  For a linear space $\Lambda$ of dimension $\lambda$ disjoint
  from $X$ we let $Q=\P^m$ denote the
  target space for the projection $\pi$ from $\Lambda$, i.e. the set of linear subspaces $L$
  of dimension $\lambda+1$ containing $\Lambda$.
  For $L\in Q$
  and $p\in L\cap X$, set
  \[e_p=n+c-\dim(T_pX+T_pL).\]
  Note $e_p$ is just the corank of the projection $\pi$ (on $X$) at $p$, i.e. $m-\dim(d\pi_p(T_pX))$.
  Also $e_p=\dim (\check N_{L, p}\cap \check N_{X, p})$ where $\check N_{L, p}$
  denotes the fibre of the conormal bundle in $\P^{n+c}$ to $L$ at $p$
  and ditto for $X$.\par
\section{Order sequence and filtration: general case}\label{order-general}  
The statement involves the notion of \emph{order sequence} $(d_\bullet)$. We define it here is greater
generality than needed for our application here.\par
  This is associated to the following data: 
  \begin{itemize}\item
  an ambient space $P$, 
  \item a point
  $p \in P$, 
  \item a linear system, 
  i.e. a finite-dimensional subspace $V\subset H^0(A)$ for some line
  bundle $A$ over $P$, with evaluation map $e:V\otimes\O_P\to A$,
  \item an ideal $\I\subset \O_P$,
  such that $\I A+e(V)\O_P\subset\O_P$ contains $\m_p^NA$
  for $N>>0$. 
  \end{itemize}
 ( The only case needed in this paper is where
  $P$ is a projective space, $A=\O(1)$, $V$ is the system of
  hyperplanes containing a linear subspace $L\subset P$ and $\I$ is the ideal of a
  variety $X\subset P$ ).\par
  Back to the general case, we denote by $L$ and $X$ respectively the base scheme
  of $V$, whose ideal is $\mathrm{bs}(V)$, i.e.
   the image of $V\otimes\O_P(-A)\to\O_P$,
  and the subscheme of $P$ corresponding to $\I$, so that
  $\O_X=\O_P/\I$. Thus, we are assuming
  that $p$ is an isolated point of $X\cap L$. To simplify matters we shall
  also assume that \emph{no nonzero element of $V$ vanishes on $X$}
  (i.e. that $X$ is 'nondegenerate' with respect to $V$).\par

The order sequence is  defined  as follows. 
First,
\[d_{1}=\max(\ord_p(\tilde f_{1}):
\tilde f_1\in \m_pA \mathrm{\ and\ } \exists 0\neq y_{1}\in V
\mathrm{\ such\ that\ }  y_1-\tilde f_{1}\in\I(A)_p).\]
Thus $d_1$ is the largest order at $p$
  of any nonzero element in $ e(V)+ \I(A)$; equivalently,
  the largest order at $p$ {\emph{when restricted
  		 on $X$}} of any nonzero element of $V$. We set
  	 \[f_1=\tilde f_1|_X=y_1|_X.\]
  \par
  Assuming $(d_1, y_1), ...,(d_i, y_i)$ are defined, then set
    \[d_{i+1}=\ord_p(\tilde f_{i+1})\] as the largest order
  of any element  $f_{i+1}\in e(V)+ \I(A)$ not in the 
  $\C$-span of of $\tilde f_1, ...,\tilde f_i$ modulo $\I(A)$.
  Thus there is an element $y_{i+1}$ in the restriction of $e(V)$
  but not in the $\C$-span of $y_1,...,y_i$ having order $d_{i+1}$
  on $X$, and $d_{i+1}$ is largest with this property. This defines
  $(d_{i+1}, y_{i+1})$.\par
  Thus  the sequence $(d_1, y_1), ...,(d_k, y_k)$ is defined, and $(d_1,...,d_k)$
  is called the order sequence associated to the above data.
Note that the sequence $(d_\bullet)$ is uniquely determined. It
will be denoted
$\Ord_p(V, I)$ or $\Ord_p(V, X)$. Note
  \[d_1\geq d_2\geq...\geq d_k, k=\dim(V).\]
 Also,
  $(y_1,...,y_k)$ is a basis for $V$ called an \emph{adapted} basis.
  It is not canonical, 
however if $d_r>d_{r+1}=...=d_s>d_{s+1}$ then
    the span of $y_{r+1},..,y_s$ is uniquely determined modulo
    the span of $y_{1}, ...,y_r$. 
    Consequently, if we define  the \emph{reduced}
    order sequence $\overline\Ord_p(V,X)=(\bar d_\bullet)$  as the sequence
    of distinct values of $\Ord_p(V,X)$,
  Then
    $V$ admits a canonical ascending \emph{order filtration} $F_\bullet V$
    (depending on $p$) so
    	that $F_iV$ consists of the elements of order $\geq \bar d_i$ and
    	an adapted basis is simply a basis adapted to this filtration.
 The reduced order sequence and order filtration are used in the definition of order subsheaf
 in \S \ref{order-subsheaf}.
 
  \begin{remarks*}
   These are nonessential but possibly clarifying. \par	
  \begin{enumerate}\item
  The corresponding $f_1,...,f_k$ to $y_1,...,y_k$ 
  are a set of generators $\mod\I$ - not necessarily minimal - of the ideal  $\n=\I+\mathrm{bs}(V)$,
  where $\mathrm{bs}(V)$ is the base ideal of $V$, 
  with the added property that they also generate the 'normal cone' ${\mathrm{gr}}^\bullet \n=
  \bigoplus \n\cap \m^i/\n\cap\m^{i+1}$.\item
 The sequence $\Ord_p(V,X)$ coincides
  with the 'vanishing sequence' (in the sense of Eisenbud-Harris
  \cite{eisenbud-harris})
  associated to the restriction of $V$ on $X$
  (in the case considered in \cite{eisenbud-harris}
  where $X$ is a smooth curve, the order sequence is strictly
  decreasing but in higher dimension this is not true).
  	\end{enumerate}
  \end{remarks*}
  \section{Order sequence and filtration: projective case}\label{order-projective}
  We will apply this construction in the case 
  where $P$ is a projective space,
  $L$ is a linear subspace, $V=H^0(\I_L(1))$, the hyperplanes through $L$,
  $A=\O_P(1)$, and $\I$ is the ideal
  of a subvariety $X$ (having finite intersection with $L$).\par
   In the above situation with 
   $X, L=\P^{n+c-m}\subset P=\P^{n+c}, p\in L\cap X$ isolated,
   we define the order sequence $\Ord_p(L, X)$ as
   $\Ord_p(H^0(\I_L(1)), X)$. It can be analyzed as follows.
   We will  assume $X$ is nondegenerate. We begin with the case $c\geq m$,
   i.e. $\dim(X)\leq\dim L$.
    Let $y_1,...,y_m$ be linear equations for $L$ (i.e.
    essentially, a basis for $H^0(\I_L(1))$, so $p$ is the origin.
   ( In the case where $L$ is a fibre for projection from $\Lambda$,
    these will be coordinates on the target $\P^m$ ).
    Then we may choose complementary linear coordinates 
    $x_1,...,x_{n+c-m}$- thus
    in effect choosing a general projection of $X$ unramifiedly onto
    a submanifold of  $L$-
    so that local analytic equations for $X$ in $P$ have the form
    \eqspl{x-equations}{y_i-f_{i}(x), i=1,...,m, \\
    	f_{m+1}(x),..., f_c(x)}
    where $f_1,...,f_c$ are analytic functions in $x_1,...,x_{n+c-m}$ only,
     defined near $p$.
    In other words, we represent $X$ locally as a graph over the submanifold of $L$ defined
    by $f_{m+1},...,f_c$. 
    NB: the latter projection is an auxiliary construction 
    and in the case where $L$ is a fibre of a projection $\pi$ from $\Lambda$ it
     is essentially complementary to $\pi$.\par
    Now by choosing a suitable basis for $V=H^0(\I_L(1))$
    we may assume $\ord_p(f_i))=d_i$ where $(d.)=\Ord_p(L, X)$
    so $y_1,...,y_m$ is an adapted basis.
    Note $e_p$, the corank, 
    is just the number of $f_i, i\leq m$ that have order $\geq 2$
    and that $f_{m+1},...,f_c$, being transverse defining equations for
    a submanifold, have order 1 at $p$. 
    Note also
    that the $f_{i}$ are not necessarily minimal generators for $\I_{X\cap L}$
    $\mod\I_L$ . This concludes the discussion of the case $m\leq c$.\par
    Now in case $c<m$, i.e. $\dim(X)>\dim(L)$, 
   we may similarly represent $X$ as a graph over all of $L$, and this
   yields local equations
    for $X$ of the form $y_i-f_i(x), i=1,...,c$.
    By suitably choosing $y_{c+1}, ...y_m$, we may assume
    $x_1,...,x_{n+c-m}, y_{c+1}, ...y_m$ are local coordinates on $X$
    and in particular $y_{c+1}, ...y_m$ have order 1 at $p$. 
     Then,
     replacing $y_1,...,y_c$ by an adapted basis for their
     span, we get an adapted basis
     $y_1,...,y_m$ of $H^0(\I_L(1))$ and 
     order sequence 
     $(\ord_p(f_1),...,\ord_p(f_c), 1=\ord_p(y_{c+1}),...,
    1=\ord_p(y_m))$.
    
     \begin{example}\label{3dim}
    	Here $c=2=m$. 
    	In $\A^3$ with coordinates $x, y_1, y_2$ let $L$ be the $x$-axis
    	with equations $y_1= y_2=0$ and let $X$ be
    	the curve $y_1-x^6, y_2-x^3$. Then the order sequence at the origin is
    	$(6,3)$ with adapted basis $(x^6,  x^3)$ or $(y_1, y_2)$.
    	$x$ is a coordinate on either $X$ or $L$ an in terms of this
    	the schematic intersection $X\cap L$ is defined by $x^3$.
    \end{example}
    \begin{example}\label{4dim}
    	Here $c=2<m=3$. 	In $\A^4$ with coordinates $x, y_1, y_2, y_3$ 
    	let $L$ be the $x$-axis
    	with equations $y_1= y_2=y_3=0$ and let $X$ be
    	the smooth surface $y_1-x^6, y_2-x^3$. Then note $y_3, x$ are local
    	coordinates on $X$ and the order sequence is $(6, 3, 1)$
    	with adapted basis $(y_1, y_2, y_3)$. Note the schematic
    	intersection $X\cap L$ has ideal $(y_3, x^3)$ on $X$
    	and $y_1, y_2, y_3$ are nonminimal generators for it.
    \end{example}
    \section{Order subsheaf}\label{order-subsheaf}
     The order filtration introduced in \S \ref{order-general} can be used to
    define a natural modification (full-rank subsheaf)
     at $p$ of the normal bundle $N_L$,
    denoted $N_{L,p}^\ord$ and called the
    \emph{order subsheaf}. For simplicity we do this only in the
    projective setting as in \S \ref{order-projective}, as follows. Consider
     the order filtration $0\subset F_1\subset...\subset F_r\subset V=H^0(\I_L(1))$
    introduced above and
    let
    \[F_r^\perp\subset...\subset F_1^\perp\subset V^*=H^0(\I_L(1))^*\]
    be the dual filtration. Note that $V^*\otimes\O_L(1)=N_L$.
    Let $K_1$ denote the kernel of the natural surjection
    \[N_L\to (V^*/F_1^\perp)\otimes(\O_L/\m_p^{\bar d_1-1})(1).\]
    Then let $K_2$ denote the kernel of the natural surjection
    \[K_1\to (F_1^\perp/F_2^\perp)\otimes\m_p^{\bar d_1-1}/\m_p^{\bar d_2-1}(1),\]
    etc.; then finally $N_{L,p}^\ord=K_r$. Also 
    \[N_L^\ord=\bigcap\limits_{p\in X\cap L}N^\ord_{L,p}.\]
    Also, define a subsheaf $N_L^{\ord, 1}\subset N_L^\ord$ analogously,
    replacing each $d_i-1$ by $d_i$, and similarly for the local analogues
    $N_{L, p}^{\ord, 1}$. These are used only in Remark \ref{sufficiency}.\par
\section{Results}
We continue with the above notations and for a point $p$
in the finite set $X\cap L$ we let $d_{\bullet, p}$ denote the 
order sequence at $p$.  To simplify notation, set
  \eqspl{ab}{
  a(X, L)=\sum\limits_{p\in L\cap X}\sum\limits_{i=1}^{e_p} 
  \binom{n+c-m+d_{i, p}-2}{d_{i, p}-2}
}
  \begin{thm}\label{contact-bound}
 Notations as above, assume $X$ is nondegenerate, 
 $\Lambda\subset\P^{n+c}$ is a general linear subspace of codimension
 $m+1$ disjoint from $X$. Let  $L$ be a codimension-$m$ linear space containing
 $\Lambda$ such that
 for each $p\in L\cap X$, 
 the pair $(L, p)$ is general with given order sequence $\Ord_p(L, X)$. Then
 the following \ul{Projection Inequality} holds:
  \[a(X, L)\leq m.\]
  \end{thm}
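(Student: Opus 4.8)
The plan is to realize $a(X,L)$ as the codimension, at a general member, of the stratum
\[
W=\{\,L'\in Q:\ \Ord_{p'}(L',X)\ \text{dominates}\ (d_{\bullet,p})\ \text{at every}\ p'\in L'\cap X\,\}\subset Q=\P^m,
\]
and then to invoke the trivial bound $\codim_LW\le\dim Q=m$ coming from $L\in W\neq\varnothing$. So the whole content is the equality $\codim_LW=a(X,L)$.

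First I would identify $a(X,L)$ sheaf-theoretically. Comparing the successive-kernel construction of the order subsheaf $N_L^{\ord}\subset N_L$ in \S\ref{order-subsheaf} with the summands of \eqref{ab} gives
\[
a(X,L)=\operatorname{length}\big(N_L/N_L^{\ord}\big),
\]
since the order-$d_{i,p}$ direction at $p$ contributes $\operatorname{length}(\O_L/\m_p^{\,d_{i,p}-1})=\binom{n+c-m+d_{i,p}-2}{d_{i,p}-2}$, whereas the directions with $d_{i,p}=1$ (those $i>e_p$) contribute nothing — which is exactly why the inner sum in \eqref{ab} stops at $e_p$.

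Next comes the deformation theory. Deformations of $L$ inside $Q$, i.e.\ those keeping $\Lambda\subset L$ fixed, form the $m$-dimensional space $T_LQ\cong V^*\subset V^*\otimes H^0(\O_L(1))=H^0(N_L)$ of normal fields vanishing along the hyperplane $\Lambda\subset L$. The order filtration is built precisely so that $N_L^{\ord}$ consists of the first-order normal fields that do not lower the order sequence at any $p\in L\cap X$; hence $T_LW=T_LQ\cap H^0(N_L^{\ord})$ and one obtains a linear obstruction map
\[
\Phi\colon T_LQ\longrightarrow H^0\big(N_L/N_L^{\ord}\big)=\C^{\,a(X,L)},\qquad \ker\Phi=T_LW,
\]
whose explicit form is read off from the order filtration by linearizing, at each $p$, the conditions that the contact functions $f_{i,p}$ retain their orders at the moving intersection point.

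Everything then rests on one assertion, which is also the main obstacle: \emph{$\Phi$ is surjective} for general $\Lambda$ and general $L\in W$. Granting it, $\codim_LW=\dim\operatorname{im}\Phi=a(X,L)\le m$. It is here that the genericity of the center is indispensable. Since $N_L/N_L^{\ord}$ is a skyscraper on the finite set $L\cap X$ and $H^1(N_L)=0$ on $L\cong\P^{\,n+c-m}$, the full evaluation $H^0(N_L)\to H^0(N_L/N_L^{\ord})$ is onto as soon as $H^1(N_L^{\ord})=0$; but passing from $H^0(N_L)$, of dimension $m(n+c-m+1)$, to the $m$-dimensional $\Lambda$-fixing subspace $T_LQ$ is a drastic restriction, and for special configurations it destroys surjectivity — indeed the degenerate Examples \ref{3dim} and \ref{4dim} already have $a(X,L)>m$. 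What rescues the general case is the vanishing lemma of \cite{fillingreduced}, a manifestation of generic smoothness of the general projection in characteristic $0$: it supplies $H^1(N_L^{\ord})=0$ and, more to the point, forces $W$ to be smooth of the expected codimension at its general point, so that every contact condition is already detected by deformations fixing $\Lambda$ and $\Phi$ stays surjective after restriction. The remaining ingredients — the binomial identity above and the explicit linearization of $\Phi$ through the order filtration — are routine.
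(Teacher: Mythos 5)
Your outline matches the paper's own strategy in its broad architecture: identify $a(X,L)$ with the length of the skyscraper quotient $N_L/N_L^{\ord}$ (your binomial count is the same one underlying \eqref{ab}), and reduce the Projection Inequality to surjectivity of the evaluation map from the $m$-dimensional space $H^0(N_L(-1))$ of $\Lambda$-fixing first-order deformations of $L$ onto that quotient. But the one step you yourself flag as ``the main obstacle'' --- surjectivity of $\Phi$ --- is where all the content lies, and your proposal does not supply it. The vanishing you invoke, $H^1(N_L^{\ord})=0$, only gives surjectivity of $H^0(N_L)\to H^0(N_L/N_L^{\ord})$ from the full $m(n+c-m+1)$-dimensional space of sections; what is actually needed is the twisted vanishing $H^1(N_L^{\ord}(-1))=0$, and nothing in your sketch produces it. Asserting that the vanishing lemma ``forces $W$ to be smooth of the expected codimension at its general point'' is a restatement of the desired conclusion, not an argument for it.

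The paper closes this gap in two concrete steps that your proposal would need. First, a local computation with $y_i'=y_i+\epsilon g_i$ and the graph equations $y_i-f_i(x)$ shows that $\ord_p(g_i)\ge d_{i,p}-1$ is a \emph{necessary} condition for a first-order deformation of $L$ to preserve the order sequence; hence the tangent space at $z=(\Lambda,L,p)$ to the incidence scheme $\mathcal Z$ of triples with fixed order sequence maps into $H^0(N_L^{\orp})$. Second, since $\mathcal Z$ dominates the Grassmannian of centers, generic smoothness in characteristic $0$ makes $T_z\mathcal Z\to H^0(N_\Lambda)$ surjective, so the image of $H^0(N_L^{\orp})\to H^0(N_L|_\Lambda)$ contains that of $H^0(N_\Lambda)$; this is exactly the hypothesis of the Generalized Vanishing Lemma (Lemma \ref{gvan-lem}), which then delivers $H^1(N_L^{\orp}(-1))=0$ and with it the surjectivity you need. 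Two smaller corrections: only the \emph{necessity} of the order conditions enters (your claimed equality $\ker\Phi=T_LW$ requires sufficiency, which the paper explicitly declines to prove and never uses); and the codimension interpretation of $W$ is dispensable --- once $\Phi:\C^m\to\C^{a(X,L)}$ is surjective, $a(X,L)\le m$ follows immediately.
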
 

Before starting the proof, we give some corollaries.
  Evidently, the only nonzero terms in the  sum defining $a(X, L)$ above are those with $d_{i, p}\geq 2$
  (of which there are indeed $e_p$ many by definition of corank). Those terms with $d_i=2$
  contribute 1 each while those with $d_i\geq 3$ contribute at least $n+c-m+1=\dim(L)+1$.
Thus, whenever $\dim(L)\geq m$, i.e. $n+c\geq 2m$, we have that $d_i\leq 2, \forall i$.  
In particular, the Theorem yields:
\begin{cor}\label{order2}
Assume $2m\leq n+c$ and $X$ nondegenerate. 
Then the fibre $L\cap X$ is locally defined at each point $p$ by equations of order
1 and at most $e_p$ equations of order 2.
\end{cor}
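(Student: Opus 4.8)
The plan is to read off Corollary~\ref{order2} from the Projection Inequality $a(X,L)\le m$ of Theorem~\ref{contact-bound} by a short numerical argument, and then to reinterpret the resulting bound on the order sequence as a bound on the orders of the local defining equations of the fibre $X\cap L$.

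First I would estimate a single summand of $a(X,L)$. A term indexed by $(i,p)$ is nonzero exactly when $d_{i,p}\ge 2$, and by the definition of corank there are precisely $e_p$ such indices above each $p$. Writing $N=\dim L=n+c-m$, the summand $\binom{n+c-m+d_{i,p}-2}{d_{i,p}-2}=\binom{N+d_{i,p}-2}{N}$ equals $1$ when $d_{i,p}=2$ and is non-decreasing in $d_{i,p}$, so
\[\binom{n+c-m+d_{i,p}-2}{d_{i,p}-2}\ \ge\ \binom{N+1}{1}=N+1=\dim L+1\qquad\text{whenever } d_{i,p}\ge 3.\]
Now I would feed in the hypothesis $2m\le n+c$, i.e. $\dim L=n+c-m\ge m$. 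If any $d_{i,p}\ge 3$ occurred, that single term would already contribute at least $\dim L+1\ge m+1$, giving $a(X,L)>m$ and contradicting Theorem~\ref{contact-bound}. Therefore $d_{i,p}\le 2$ for every $p\in L\cap X$ and every $i$; at each $p$ exactly $e_p$ of these values equal $2$ while the rest equal $1$.

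It remains to pass from orders of the $y_i$ to defining equations. By the first of the clarifying Remarks in \S\ref{order-general}, the functions $f_1,\dots,f_m$ attached to an adapted basis of $H^0(\I_L(1))$ generate, modulo $\I$, the fibre ideal $\n=\I+\mathrm{bs}(V)$ of $X\cap L$ — indeed they generate its normal cone — and $\ord_p(f_i)=d_{i,p}$. Since all $d_{i,p}\le 2$, these generators are of order $1$ or $2$, the $e_p$ corank directions supplying the order-$2$ (quadratic) equations and the others the order-$1$ (linear) equations. Passing to minimal generators and allowing for the redundancy among the order-$2$ generators noted after Example~\ref{4dim} yields the stated description, with at most $e_p$ quadratic equations.

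The arithmetic here is immediate; the real content lies in the last step and in the hypotheses of Theorem~\ref{contact-bound}. One must be sure the $f_i$ cut out $X\cap L$ \emph{scheme-theoretically} near $p$ rather than merely set-theoretically — this is exactly the normal-cone generation clause of that Remark — and that the genericity assumptions of Theorem~\ref{contact-bound} on the pair $(L,p)$ can be realized simultaneously at all points $p$ of a fibre of the general projection, so that the Projection Inequality may be invoked once for the whole fibre. I expect this uniform-genericity point, rather than the combinatorics, to be the main thing to check.
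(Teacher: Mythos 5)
Your proposal is correct and follows essentially the same route as the paper: the paper's proof is exactly the observation that each term with $d_{i,p}=2$ contributes $1$ to $a(X,L)$ while any term with $d_{i,p}\ge 3$ contributes at least $n+c-m+1=\dim(L)+1\ge m+1$, contradicting the Projection Inequality $a(X,L)\le m$ when $2m\le n+c$. Your additional care about the $f_i$ generating the fibre ideal scheme-theoretically and about the genericity of $(L,p)$ at all points simultaneously is sensible but already built into the first Remark of \S\ref{order-general} and into the hypotheses of Theorem \ref{contact-bound} as stated.
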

\begin{cor}\label{exp} Assumptions as above,
for a general projection to $\P^{n+1}$, 
  length of any fibre is at most $2\max(2^{\sqrt{n}}+n-1,1+2\sqrt{2}(n-1)) $
  and the local length at any point is at most $2^{\sqrt{n}}$.
  \end{cor}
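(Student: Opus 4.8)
The plan is to combine Corollary \ref{order2} with the Projection Inequality of Theorem \ref{contact-bound} and the classical bound on the corank. Throughout we are in the range $2m\le n+c$ forced by $m=n+1$ (i.e. $c\ge n+2$), where Corollary \ref{order2} applies: at every point $p$ of a fibre $L\cap X$ the scheme $L\cap X$ is cut out, inside the smooth germ $(X,p)$, by $m-e_p$ equations of order $1$ with independent linear parts together with exactly $e_p$ equations of order $2$. First I would record that those $m-e_p$ linear equations form part of a coordinate system on $(X,p)$ and cut out a smooth germ of dimension $n-(m-e_p)=e_p-1$; the fibre is then defined on this germ by $e_p$ functions of order $2$. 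Thus the whole problem reduces to bounding the length of an $\m$-primary ideal generated by $e_p$ order-$2$ elements in a regular local ring of dimension $e_p-1$.

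For the local length I would pass to the tangent cone. Writing $Q_1,\dots,Q_{e_p}$ for the leading quadrics, one has $\ell_p=\dim_\C\gr(\O_{L\cap X,p})\le\dim_\C k[t_1,\dots,t_{e_p-1}]/(Q_1,\dots,Q_{e_p})$. Under the genericity hypothesis on $(L,p)$ this quadric ideal is again $\m$-primary, so $e_p-1$ general $\C$-combinations of the $Q_i$ form a homogeneous system of parameters, i.e. a complete intersection of degree $2^{e_p-1}$, and the remaining quadric can only shorten the quotient. Hence $\ell_p\le 2^{e_p-1}$.

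Next I would invoke the corank bound. The locus in $X$ where $e_p\ge e$ is the locus where the conormal space $\check N_{X}(p)$ meets the fixed $(n{+}1)$-plane $\check N_L$ in dimension $\ge e$, of codimension $e(e-1)$ by the Schubert/Thom--Boardman count; for a general projection it is nonempty only when $e(e-1)\le n$, so $e_p\le\frac{1+\sqrt{1+4n}}{2}$ and therefore $\ell_p\le 2^{\sqrt n}$, giving the local statement (this is the classical corank bound recalled in the Introduction). For the total length I would then sum: by the Projection Inequality $a(X,L)=\sum_{p}e_p\le m=n+1$ in this range, which controls both the number of fibre points and the aggregate corank, while the same dimension count limits how many points of near-maximal corank one fibre can carry, since the images of the high-corank strata are too small to meet. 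Maximizing $\sum_p 2^{e_p-1}$ under these constraints produces the two regimes of $2\max(2^{\sqrt n}+n-1,\,1+2\sqrt2(n-1))$: one dominant high-corank point of length $\le 2^{\sqrt n}$ together with a contribution linear in $n$ from the remaining low-corank points, versus an optimum spread over points of moderate corank.

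The hard part will be twofold. The first difficulty is controlling the tangent cone in the length step: I must guarantee that the leading quadrics already generate an $\m$-primary ideal, so that the complete-intersection bound $2^{e_p-1}$ is valid and the length is not inflated by higher-order initial forms --- this is precisely where genericity of the pair $(L,p)$ must be used. The second, and the real, difficulty is the combinatorial optimization for the total length: the Projection Inequality alone would permit as many as $\sim\sqrt n$ points of corank $\sqrt n$, and hence a spurious total of order $\sqrt n\,2^{\sqrt n}$, so one must deploy the genericity dimension count to exclude fibres carrying many points of large corank and then solve the resulting constrained maximization to extract the stated constants.
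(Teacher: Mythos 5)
Your overall architecture --- a local bound of the shape $2^{O(e_p)}$, a corank bound of order $\sqrt n$, and a global optimization over the points of one fibre --- matches the paper's, but the global step has a genuine gap which you yourself flag and then do not close, and it is exactly the step that produces the stated constants. Your only quantitative constraint linking the coranks of the several points of a single fibre is the linear inequality $\sum_p e_p\le m=n+1$ extracted from Theorem \ref{contact-bound}; as you observe, this still permits on the order of $\sqrt n$ points each of corank about $\sqrt n$, hence a spurious total of order $\sqrt n\,2^{\sqrt n}$. You propose to repair this with an unspecified ``genericity dimension count'' excluding fibres with many high-corank points, but no such statement is formulated, and it is not clear what form it would take for the points of one fibre. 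The paper instead invokes Mather's theorem in the quadratic form $\sum_{p} e_p^2\le n$ (which simultaneously gives $|S|\le n$ and $e_p\le\sqrt n$), and then bounds the total length by the maximum of $g(x)=\sum 2^{x_i}$ on the ball $\sum x_i^2\le n$ in $\R^n$; the cited bordered-Hessian computation shows the maximum occurs at vectors of the three shapes $(a,\dots,a)$, $(a,\dots,a,b)$, $(a,b,\dots,b)$, and estimating $g$ there is precisely what yields the two regimes $2(2^{\sqrt n}+n-1)$ and $2(1+2\sqrt 2(n-1))$. Without the quadratic constraint your optimization cannot produce these constants; this is the missing ingredient.

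A secondary issue is the local bound. Your tangent-cone argument for $\ell_p\le 2^{e_p-1}$ requires the leading quadrics $Q_1,\dots,Q_{e_p}$ to generate an $\m$-primary ideal in $e_p-1$ variables; you flag this but give no argument, and it does not follow from $\m$-primariness of the ideal itself: the initial ideal of an $\m$-primary ideal can be strictly larger than the ideal generated by the initial forms of the chosen generators, and a parameter ideal generated by order-$2$ elements has colength at least (not at most) $2^{e_p-1}$, with equality only when the initial forms already form a system of parameters. The paper avoids this refinement entirely and uses only the weaker bound $\ell_p\le 2^{e_p}$ from Corollary \ref{order2}, which suffices for the stated statement. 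Likewise your pointwise Thom--Boardman corank estimate is asymptotically adequate but is strictly weaker than the summed inequality $\sum_p e_p^2\le n$ that the argument actually needs.
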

  \begin{proof}[Proof of Corollary \ref{exp}] 
  	Recall that $m$, the codimension of $L$, is also the dimension of the
  	target projective space for projection from $\Lambda$.
  So here we are taking a general $\Lambda\subset\P^{n+c}$
  	of codimension $m+1=n+2$. 
Then consider an arbitrary fibre  $L_0\cap X$ 
of projection from $\Lambda$, with order sequences
$(d_{\bullet,p}=\Ord_p(L_0, X))$
  for $p\in S_0:=(L_0\cap X)_{\mathrm{\red}}$ (a finite set).
  Let $L$ be general with the property that $S:=(L\cap X)_{\mathrm{red}}$
  deforms $S_0$ and 
  $\Ord_p(L, X)=d_{\bullet,p}$ for all $p\in S$.
  Corollary \ref{order2} shows that the local length
  of $L\cap X$ at $p\in S$ is at most $2^{e_p}$. Mather's theorem \cite{mather} 
  implies that $\sum e_p^2\leq n$ and hence $|S|\leq n$
  and of course $2^{e_p}\leq 2^{\sqrt n}$.\par 
  Now in \cite{stackexchange} it is proven, using a constrained or 'bordered'-
  Hessian calculation,
  that the maximum of the 
  function $g(x)=\sum 2^{x_i}$ on the ball $B=\{(x_1,...,x_n):\sum x_i^2\leq n\}$
  in $\R^n$ is achieved when the vector $(x_1,...,x_n)$ is, up to permutation,
  of the form either \par(i) $(a,...,a)$, or\par
   (ii) $(a,...,a,b)$ with 
  $0<a<1<b$ and $2^a/a=2^b/b$, or \par 
  (iii) $(a, b,...,b)$ with $0<a<1<b$ and $2^a/a=2^b/b$.\par
  Values $g(x)$ are bounded above as follows:\par
  For $x$ of type (i), we have $a\leq 1$ because $x\in B$, hence  $g(x)\leq 2n$. \par
  For $x$ of type (ii), 
  \[g(x)\leq (n-1+2^{b-a})2^a
  \leq (n-1+2^{\sqrt{n}})2.\]
  For $x$ of type (iii) we have, using $b^2\leq n/(n-1)$ and $n\geq 3$:
  \eqsp{g(x)\leq& (1+(n-1)2^{b-a})2^a\\
  \leq& (1+(n-1)2^{\sqrt{n/(n-1)}})2<2(1+(n-1)2^{1+1/(n-1)})<2(1+(n-1)2^{3/2}).}

  It follows that in our situation 
  the max in question, with the $x_i$ nonnegative integers,
   cannot exceed
  $2\max(2^{\sqrt{n}}+n-1,1+2\sqrt{2}(n-1)) $. This implies our conclusion.
  \footnote{As the referee points out, the linear bound dominates for $n\leq 37$.}
  \end{proof}


As mentioned above, a construction due to 
Lazarsfeld \cite{laz-pos}, Vol II, Cor. 7.2.18 shows that whenever the 
cotangent bundle $\Omega_X$ is nef, then for any $\Lambda$ of codimension
$n+2$ disjoint from X 
there exist points $p\in X$ where the corank $e_p$ is roughly $\sqrt{n}$.
In these cases, Beheshti and Eisenbud \cite{beis} have shown,
using Stirling's approximation, that $\l(L\cap X)$ is
asymptotically at least $(\sqrt{2/\pi})2^{\sqrt{n}}/\sqrt[4]{n}$.
Thus the bound of Corollary \ref{exp} is 'sharp on 
the dominant term'. 
\begin{rem}
	Because a general projection to $\P^m, m>n+1$ may be followed by a general projection
	$\P^m\cdots\to\P^{n+1}$ to yield a general projection to $\P^{n+1}$, the bounds
	of Corollary \ref{exp} also hold to projection to $\P^m$ for all $m\geq n+1$. But 
	these bounds need not be sharp for  $m>>n$.
\end{rem}
  
   \begin{proof}[Proof of Theorem \ref{contact-bound}]
 We work locally at an isolated point $p\in X\cap L$. We will 
 assume $m\leq c $ as the case $m>c$ is similar.
 We begin by representing $X$ locally as a graph over a submanifold of $L$,
 just as in \S \ref{order-projective}.
 Let $y_1,...,y_m$ be affine linear equations for $L$. Then, as
 discussed above, we may choose local analytic coordinates $x_1,...,x_{n+c-m}$
 on $L$ so that local analytic equations for $X$ have the form
 \eqspl{equations}{
 	y_1-f_1(x),...,y_m-f_m(x), f_{m+1}(x),...,f_c(x) ,}
 	with the $f_i$ functions of the $x$ coordinates only, vanishing at $p$,
 so that $( d_1,...,d_m)=\Ord_p(L, X)$ is an order
 sequence for $X, L, p$ with adapted basis $(f_\bullet)$.
 Here $f_{m+1},...,f_c$ cut out the projection
 $\bar X$ on $L$ which is smooth hence they
  have linearly independent differentials at
 $p$ that cut out the tangent space at $p$ to
 $\bar X$. NB $\bar X$ is not the projection of $X$
 from $\Lambda$ to $\P^m$.
 
 \par
 Now consider a local first-order deformation of $L$ over $\C[\epsilon]/(\epsilon^2)$.
 This corresponds to a subscheme
 $\mathscr L$ of $\P^{n+c}\times\Spec(\C[\epsilon]/(\epsilon^2))$, flat over $\Spec(\C[\epsilon]/(\epsilon^2))$
 and extending $L$, or equivalently to an element $v$ of the tangent space
 of the Grassmannian $\gg(n+c-m, n+c)$ at $[L]$, i.e. $v\in H^0(N_L)$.
Equation-wise, $\mathscr L$ is given near $p$  by deforming each equation
 $y_i$ to 
 \[y_i'=y_i+\epsilon g_i(x), i=1,...,m, \epsilon^2=0\]
 (for a global deformation $g_i$ is linear but this is local).
 This corresponds to the normal vector field $v$
 (section of $N_L=\Hom(\I_L, \O_L)$) defined by
  $(y_i\mapsto g_i(x), i=1,...,m)$.\par
  Now consider a first-order deformation $u$ of the point $p$
  in the ambient space $\P^{n+c}$. In coordinates $u$ is given by
  a pair $(b, a)$ where $b=(b_1,...,b_m)$ is the $y$-component and
  $a$ is the $x$ component, i.e. the projection to $L$.
  Compatibility of $u$ and $v$, i. e. the condition that the pair
  $(u,v)$ is tangent to the incidence locus of points on subspaces,
  reads in coordinate form\eqspl{u,v}{
  	b_i-g_i(0)=0.
  }
The condition that $u$ is tangent to $X$ reads
\eqspl{TX}{
	b_i-a\cdot\nabla f_i(0)=0, i=1,...,m\\
	a\cdot\nabla f_i(0)=0, i=m+1,...,c.
}
The condition on $a$ alone is then
\eqspl{TXa}{
	a\cdot\nabla f_i(0)=g_i(0), i=1,...,m\\
	a\cdot\nabla f_i(0)=0, i=m+1,...,c,
}
This then is exactly the condition that $a$ represent a tangent vector
at $p$ that is compatible with $(g_\bullet(x))$ and tangent to $X$.
Note that these conditions imply that $g_i(0)=0$ whenever $\ord_p(f_i)>1$.\par

Now assume $(L, p)$ is general with given order sequence
 $\Ord_p(L, X)$,  and consider a deformation of $(p,L)$ that preserves $\Ord_p(L, X)$.
 Then $(L, p)$ must deform to a nearby 
 pair $(L', p')$ having similar position with respect to $X$
 at $p'$,
 so $X$ is locally given by similar equations 
 \[y'_1-f'_1,..., y'_m-f'_m, f'_{m+1},...,f'_c\]
 as in \eqref{equations},  
 and because the order sequence is preserved, the $f'_i$ 
 must have the same order
 $d_i$ as $f_i$, albeit 
 with respect to $p'$.
  Now we have, since $\epsilon^2=0$ 
  (which implies $\epsilon g_i(x+\epsilon a)=\epsilon g_i(x)$),
  that for all $i\leq m$,
 \eqspl{nabla}{y'_i-f'_i(x+\epsilon a)=y_i+\epsilon g_i(x+\epsilon a)-f_i(x+\epsilon a)
 =y_i+\epsilon g_i(x)-f_i(x)-\epsilon a\cdot\nabla f_i(x)}
 where $\ord_p(\nabla f_i)\geq d_i-1$.
 Because $y'_i-f'_i(x+\epsilon a)$ must have order $d_i$ at least,
 the terms of order $<d_i$ in $g_i(x)$ must cancel out with the
 $\epsilon a\cdot\nabla f_i(x)$, while $\ord_p(\nabla f_i)\geq d_i-1$
 it follows that $g_i(x)$ cannot have any term of order
 $<d_i-1$, in other words 
 \eqspl{order}{\ord_p(g_i)\geq d_i-1, \forall  i\leq m}
 is a \emph{necessary} condition on the deformation given by the $g_i$
 to preserve the order sequence. \par
 We denote the subsheaf of $N_L$ defined by the conditions \eqref{order} 
 for given $p$ by $N_{L,p}^{\orp}$
  and set $N_L^{\orp}=\bigcap\limits_p N_{L, p}^\orp$.
  This is called the \emph{order-preserving} subsheaf.
	\begin{rem}[Incidental remark]\label{sufficiency} Though unimportant for the proof, one may wonder, 
what about sufficiency ?
 Indeed sufficiency is \emph{irrelevant} for the purpose of proving the Theorem:
 all we need is that there is \emph{some} subsheaf,
say $N^1_L$  of $N_L^\ord$,
 containing $N^{\ord, 1}$, such that a local first-order deformation of
 $L$ preserves order sequence iff it is locally in $N^1_L$ near each 
 $p\in L\cap X$. As the argument below will show, the exact nature of $N^\orp_L$
is immaterial. That said, note as to the sufficiency question that by
\eqref{nabla}, 
given $g_i(x)$ of order $\geq d_i-1$, the function
 $y'_i-f'_i(x+\epsilon a)$ as above will have
 order $d_i$ for given $a$ whenever the term of order $d_i-1$
 in $g_i$ is cancelled by the like term in
  $a\cdot\nabla f_i(x)$. Here $a$ must satisfy the
 conditions \eqref{TXa}, meaning that the point $p$
 is moving compatibly with the motion of $L$ and remains on $X$ (which is not moving). 
 So given $(g_\bullet(x))$, the sufficient condition
 at $p$ to preserve order sequence is that there should exist $a$ satisfying
the conditions \eqref{TXa} such that
\[(g_i(x))\equiv (a\cdot\nabla f_i(x))\mod m_p^{d_i}.\] 
Again, as we saw, the conditions on $a$ 
mean that $a$ is the $L$-projection of a tangent vector at $p$ that is
compatible with the deformation $(g_\bullet(x))$ and tangent to $X$.\qed
\end{rem}
Returning to the main argument, note the exact diagram
\eqspl{secant-seq}{
\begin{matrix}
	&&&&&0&\\
	&&&&&\uparrow&\\
	0\to&N_L^\ord&\to&N_L&\to&A^\ord&\to 0\\
	&\uparrow&&\parallel&&\uparrow&\\
	0\to&N_L^\orp&\to&N_L&\to& A^\orp&\to 0\\
	&\uparrow&&&&&\\
	&0&&&&&
	\end{matrix}	
}
 where $A^\ord, A^\orp$ are torsion sheaves supported at $L\cap X$.
  Also $A^\ord$ decomposes as direct sum
 of cyclic torsion sheaves, of total length at least  $a(X, L)$.
 Now note the following:
 \begin{lem}[Generalized vanishing lemma]\label{gvan-lem}
 	Suppose $(\Lambda, L, \P^M)\simeq (\P^{M-m-1}, \P^{M-m}, \P^M)$,
 	and let $N^0\subset N_L$ be a modification on a finite subset
 	disjoint from $\Lambda$. Assume the image of
 	\eqspl{rest1}
 	{H^0(L, N^0)\to H^0(\Lambda, N^0|_\Lambda)=H^0(\Lambda, N_L|_\Lambda)
 	}
 	contains the image of
 	\eqspl{rest2}
 	{H^0(\Lambda, N_\Lambda)\to H^0(\Lambda, N_L|_\Lambda).
 	}
 	Then $H^1(L, N^0(-1))=0$.
 	\end{lem}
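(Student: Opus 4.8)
The plan is to reduce the statement to structure-sheaf vanishing on projective space and then to \emph{climb}, from a high Serre-vanishing twist down to the twist $-1$, using the hypothesis only to start the descent. First I would record the shape of the sheaves. Since $L\cong\P^{M-m}$ is linear, $N_L\cong\O_L(1)^{\oplus m}$ and $N_L|_\Lambda\cong\O_\Lambda(1)^{\oplus m}$. Because $N^0$ is modified from $N_L$ only on a finite set $Z$ with $Z\cap\Lambda=\emptyset$, the sheaf $N^0$ agrees with $N_L$ in a neighbourhood of $\Lambda$; in particular it is locally free there, $N^0|_\Lambda\cong N_L|_\Lambda$, and multiplication by a linear form cutting out $\Lambda$ in $L$ is injective on $N^0$. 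Thus for every $j$ the hyperplane-restriction sequence
\[0\to N^0(j-1)\to N^0(j)\to N_L(j)|_\Lambda\to 0\]
is exact. I would also note the structure-sheaf vanishing $H^1(\Lambda,N_L(j)|_\Lambda)=H^1(\P^{M-m-1},\O(j+1))^{\oplus m}=0$ for all $j\geq 0$.

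Next I would translate the hypothesis. The normal-bundle sequence $0\to N_{\Lambda/L}\to N_\Lambda\to N_L|_\Lambda\to 0$ has $N_{\Lambda/L}\cong\O_\Lambda(1)$, whose $H^1$ vanishes, so $H^0(\Lambda,N_\Lambda)\to H^0(\Lambda,N_L|_\Lambda)$ is onto; hence the image of \eqref{rest2} is all of $H^0(\Lambda,N_L|_\Lambda)$. The assumed containment of images therefore says precisely that the twist-$0$ restriction $r_0\colon H^0(L,N^0)\to H^0(\Lambda,N_L|_\Lambda)$ is surjective.

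The core of the argument is a bootstrap followed by a telescope. I would first upgrade surjectivity of $r_0$ to surjectivity of $r_j\colon H^0(L,N^0(j))\to H^0(\Lambda,N_L(j)|_\Lambda)$ for every $j\geq 0$: multiplying a section of $N^0$ by a form in $H^0(\O_L(j))$ and restricting produces the product of its $\Lambda$-restriction with the restricted form, and since $H^0(\O_L(j))\to H^0(\O_\Lambda(j))$ is onto and $\O_\Lambda(1)$ is globally generated, such products already span $H^0(\O_\Lambda(j+1))^{\oplus m}=H^0(\Lambda,N_L(j)|_\Lambda)$. Feeding $r_j$ surjective and $H^1(N_L(j)|_\Lambda)=0$ into the long exact sequence of the displayed restriction sequence gives $H^1(L,N^0(j-1))\cong H^1(L,N^0(j))$ for every $j\geq 0$. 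Serre vanishing makes $H^1(L,N^0(j))=0$ for $j\gg 0$, and telescoping these isomorphisms downward yields $H^1(L,N^0(-1))=0$.

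The step that needs the most care is the bootstrap together with the exactness of the restriction sequences: both rest on the disjointness $Z\cap\Lambda=\emptyset$, which is what makes $N^0$ locally free along $\Lambda$ and lets surjectivity at twist $0$ propagate multiplicatively to all positive twists. The comparison with $N_\Lambda$ plays only the auxiliary role of certifying that the hypothesis supplies full surjectivity of $r_0$; once that is in hand the vanishing is forced by the climb to Serre vanishing rather than by any delicate dimension count.
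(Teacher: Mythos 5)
Your proof is correct and follows essentially the same route as the paper's (which defers to the proof of Lemma 5.8 of \cite{filling}, sketched later in the text): translate the hypothesis into surjectivity of the restriction $H^0(L,N^0)\to H^0(\Lambda, N_L|_\Lambda)$, propagate this to all nonnegative twists, deduce injectivity (in fact bijectivity) of $H^1(N^0(j-1))\to H^1(N^0(j))$ from the hyperplane-restriction sequence, and descend from Serre vanishing to the twist $-1$. Your observation that the map from $H^0(\Lambda,N_\Lambda)$ is already surjective, so the hypothesis amounts to full surjectivity at twist $0$, matches the remark the paper makes immediately after the lemma.
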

\begin{proof} The proof is identical to the proof of  
 the Vanishing Lemma (\cite{filling}, Lemma 5.8).
 \end{proof}
\begin{rem*}
	In fact the map \eqref{rest2} is clearly surjective, so the assumption
	is actually equivalent to  the map \eqref{rest1} being surjective.
	Intuitively the assumption as stated means that any given 1-parameter
	deformation of $\Lambda$ is 'covered' by a compatible 
	$N^0$ deformation of $L$, where compatibility means having the same
	image in $H^0(\Lambda, N_L|_\Lambda)$ (note that $N_L|_\Lambda$
	is a quotient of $N_\Lambda$).\qed
\end{rem*}
Now as in \cite{filling}, 
using $\mathrm{char}(\C)=0$,
the hypothesis that $\Lambda$ is general with
respect to $X$ and that $L$ is general with given order sequences at 
each point $p\in L\cap X$ shows that the hypotheses of Lemma \ref{gvan-lem}
are satisfied for $N^{\orp}_L$ in place of $N^0$. Briefly, this results from
the fact that the scheme $\mathcal Z$
parametrizing triples $z=(\Lambda, L, p)$ 
with given order sequence- in fact already the corresponding reduced 
scheme $\mathcal Z_{\mathrm{red}}$-
projects generically surjectively to the Grassmannian 
$G=\mathbb G(\lambda, n+c)$  parametrizing $\Lambda$s,
where the map $\mathcal Z\to G$ is the composite of
the natural map of $\mathcal Z$ to the flag variety of pairs $(\Lambda, L)$
and the projection of the latter to $G$.
Then by generic smoothness in char. 0 this induces a surjection on
Zariski tangent spaces $T_z\mathcal Z_\mathrm{red}\to T_\Lambda G$,
a fortiori $T_z\mathcal Z\to T_\Lambda G=H^0(\Lambda, N_\Lambda)$ is surjective.
But by definition of order preserving and generality of $L$ with given order sequence,
we have a diagram
\eqspl{}{
\begin{matrix}
	T_z\mathcal Z&\to &H^0(\Lambda, N_\Lambda)\\
	\downarrow&&\downarrow\\
	H^0(N^\orp)&\to&H^0(N_L|_\Lambda)
	\end{matrix}	
}
where the right vertical arrow is obviously surjective. Thus, the hypotheses of Lemma
 \ref{gvan-lem} are satisfied.
Consequently by the Lemma, 
the cohomology sequence of the bottom row of 
\eqref{secant-seq} twisted by -1 yields
 \[m=h^0(N_L(-1))\geq\l(A^\orp)\geq \l(A^\ord)\geq a(X, L).\]\par
 This proves the theorem. 
 \end{proof}
\begin{example} This is an example of a 1-st order deformation preserving 
	order sequence.
	Consider the situation of Example \ref{3dim}. Deforming $L$ by 
	$\epsilon g_1=6\epsilon x^5, \epsilon g_2=3\epsilon x^2$, 
	i.e. deforming to $y_1=6\epsilon x^5, y_2=3\epsilon x^2$,
	the deformation
	of $L$ has the same order sequence
	with adapted basis $((x+\epsilon)^6, (x+\epsilon)^3)$
	at the point on $X$ with $x$-coordinate $-\epsilon$,
	i.e. $(-\epsilon, 0, 0)$, which is a deformation
	of the origin on $X$. 
	\end{example}
   \bibliographystyle{amsplain}
   \bibliography{./mybib}
   
\end{document}